\newtheoremstyle{break}
  {9pt}
  {9pt}
  {\itshape}
  {}
  {\bfseries}
  {.}
  {\newline}
  {}
\theoremstyle{break}
\newtheorem{bthm}{Theorem}
\newtheorem{bcor}{Corollary}
\theoremstyle{plain}
\newtheorem{thm}{Theorem}[section]
\newtheorem{cor}[thm]{Corollary}
\newtheorem{lemma}[thm]{Lemma}
\newtheorem{prop}[thm]{Proposition}
\newtheorem{defn}[thm]{Definition}
\newtheorem{notn}[thm]{Notation and conventions}
\newtheorem{rem}[thm]{Remark}
\renewcommand{\proofname}{Proof}
\def\RR{{\mathbb R}}
\def\Sing{\operatorname{Sing}}
\def\max{\operatorname{max}}
\def\Nlc{\operatorname{Nlc}}
\def\Nnef{\operatorname{Nnef}}
\def\Nna{\operatorname{Nna}}
\def\Supp{\operatorname{Supp}}
\def\Exc{\operatorname{Exc}}
\def\Null{\operatorname{Null}}
\def\ord{\operatorname{ord}}
\def\NN{{\mathbb N}}
\def\QQ{{\mathbb Q}}
\def\B{\mathbf{B}}
\def\D{{\Delta}}
\def\O{{\mathcal O}}
\def\*{\otimes}
\def\eqv{\equiv}
\def\+{\oplus}                   
\def\*{\otimes}                  
\def\Supp{\operatorname{Supp}}
\def\Bs{\operatorname{Bs}}
\begin{document}

\title[Nakamaye's theorem on  log canonical pairs]{Nakamaye's theorem on log canonical pairs} 

\dedicatory{\normalsize \dag \ Dedicated to the memory of Fassi} 

\author[S. Cacciola and A.F. Lopez]{Salvatore Cacciola* and Angelo Felice Lopez*}

\thanks{* Research partially supported by the MIUR national project ``Geometria delle variet\`a 
algebriche" PRIN 2010-2011.}

\address{\hskip -.43cm Dipartimento di Matematica e Fisica, Universit\`a di Roma 
Tre, Largo San Leonardo Murialdo 1, 00146, Roma, Italy. e-mail {\tt cacciola@mat.uniroma3.it, lopez@mat.uniroma3.it}}

\thanks{{\it 2010 Mathematics Subject Classification} : Primary 14C20, 14F18. Secondary 14E15, 14B05}

\begin{abstract} 
We generalize Nakamaye's description, via intersection theory, of the augmented base locus of a big and nef divisor on a normal pair with log-canonical singularities or, more generally, on a normal variety with non-lc locus of dimension $\leq 1$. We also generalize Ein-Lazarsfeld-Musta{\c{t}}{\u{a}}-Nakamaye-Popa's description, in terms of valuations, of the subvarieties of the restricted base locus of a big divisor on a normal pair with klt singularities. 
\end{abstract}

\maketitle

\section{Introduction}
\label{intro}

Let $X$ be a normal complex projective variety and let $D$ be a big $\QQ$-Cartier $\QQ$-divisor on $X$. The stable base locus
\[ \B(D)= \bigcap_{E \geq 0 : E \sim _{\QQ} D} {\rm Supp}(E) \]
is an important closed subset associated to $D$, but it is often difficult to handle. On the other hand, there are two, perhaps even more important, base loci associated to $D$. 

One of them is the augmented base locus (\cite{n}, \cite[Def. 1.2]{elmnp1}) 
\[ \B_+(D)= \bigcap_{E \geq 0 :  D - E \mbox{\ ample}} {\rm Supp}(E) \]
where $E$ is a $\QQ$-Cartier $\QQ$-divisor. Since this locus measures the failure of $D$ to be ample, it has proved to be a key tool in several recent important results
in birational geometry, such as Takayama \cite{t}, Hacon and McKernan's \cite{hm} effective birationality of pluricanonical maps or Birkar, Cascini, Hacon and McKernan's \cite{bchm} finite generation of the canonical ring, just to mention a few.

One way to compute $\B_+(D)$ is to pick a sufficiently small ample $\QQ$-Cartier $\QQ$-divisor $A$ on $X$, because then one knows that $\B_+(D) = \B(D-A)$ by  \cite[Prop. 1.5]{elmnp1}. 

In the case when $D$ is also nef, for every subvariety $V \subset X$ of dimension $d \geq 1$ such that $D^d \cdot V = 0$, we have that $D_{|V}$ is not big, whence $s(D-A)_{|V}$ cannot be effective for any $s \in \NN$ and therefore $V \subseteq \B(D-A) = \B_+(D)$. Now define
\[ {\rm Null}(D) = \bigcup_{V \subset X: D^d \cdot V = 0} V \]
so that, by what we just said,
\begin{equation}
\label{nak}
{\rm Null}(D) \subseteq \B_+(D).
\end{equation}
A somewhat surprising result of Nakamaye \cite[Thm. 0.3]{n} (see also \cite[\S 10.3]{laz}) asserts that, if $X$ is {\it smooth} and $D$ is big and nef, then in fact equality holds in \eqref{nak}.
 
As is well-known, in birational geometry, one must work with normal varieties with some kind of (controlled) singularities. In the light of this, it becomes apparent that it would be nice to have a generalization of Nakamaye's Theorem to normal varieties. While in positive characteristic the latter has been recently proved to hold, on any projective scheme, by Cascini, McKernan and Musta{\c{t}}{\u{a}} \cite[Thm. 1.1]{cmm}, we will show in this article a generalization to normal complex varieties with log canonical singularities. This partially answers a question in \cite{cmm}. 

More precisely let us define

\begin{defn}
{\rm Let $X$ be a normal projective variety. The {\bf non-lc locus} of $X$ is
\[ X_{\mathrm{nlc}} =\bigcap_{\D} \mathrm{Nlc}(X,\D) \]
where $\D$ runs among all effective Weil $\QQ$-divisors such that $K_X + \D$ is $\QQ$-Cartier and $\mathrm{Nlc}(X,\D)$ is the locus of points $x \in X$ such that $(X, \D)$ is not log canonical at $x$.}
\end{defn}

Using Ambro's and Fujino's theory of non-lc ideal sheaves \cite{a}, \cite{f} and a modification of some results of de Fernex and Hacon \cite{dh}, we prove

\vskip .3cm

\begin{bthm}
\label{main}
Let $X$ be a normal projective variety such that $\dim X_{\mathrm{nlc}} \leq 1$. 
Let $D$ be a big and nef $\QQ$-Cartier $\QQ$-divisor on $X$. Then 
\[ \B_+(D)=\Null(D). \]
\end{bthm}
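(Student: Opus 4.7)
The plan is to prove the non-trivial inclusion $\B_+(D)\subseteq \Null(D)$ by contradiction, in the spirit of the original argument of Nakamaye. Suppose some irreducible component $V\subseteq \B_+(D)$ satisfies $D^{\dim V}\cdot V>0$, so that $V\not\subseteq \Null(D)$. Fix a small ample $\QQ$-divisor $A$ so that $\B_+(D)=\B(D-A)$. The goal is then to construct an effective divisor and an auxiliary boundary on $X$ whose vanishing theorem produces a global section of $\O_X(mD)$ for some $m\gg 0$ not vanishing identically along $V$, contradicting $V\subseteq \B(D-A)$.

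The main substitution compared with the classical smooth case is that multiplier ideals and Kawamata--Viehweg / Nadel vanishing must be replaced by the non-lc ideal sheaves $\J_{\mathrm{NLC}}(X,\D)$ of Ambro--Fujino together with Fujino's vanishing theorem for them. Concretely, I would aim to produce an effective Weil $\QQ$-divisor $\D$ on $X$ with $K_X+\D$ $\QQ$-Cartier, with $\lambda D - (K_X+\D)$ big and nef for suitable $\lambda\in\QQ_{>0}$, and such that $V$ is an isolated irreducible component of $\Nlc(X,\D)$ in a neighbourhood $U$ of a general point of $V$. Then Fujino's vanishing provides a surjection $H^0(X,\O_X(mD))\twoheadrightarrow H^0(\Nlc(X,\D),\O_{\Nlc(X,\D)}(mD))$ for $m\gg 0$, and any nonzero section on the $V$-component lifts to a global section of $mD$ not vanishing along $V$, producing the desired contradiction.

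To construct $\D$, I would combine two ingredients. First, the de Fernex--Hacon type construction of boundaries cutting out a prescribed subvariety as a minimal non-klt (or here, non-lc) center: starting from a general very ample divisor and using that $D^{\dim V}\cdot V>0$, asymptotic multiplier/Seshadri-type estimates produce an effective $\D_0\sim_\QQ \lambda D$ whose non-lc locus contains $V$ with the correct multiplicity. Second, a cutting-down step using general hyperplane sections of a sufficiently ample divisor, tuned by the standard tie-breaking trick, to isolate $V$ as an irreducible component of $\Nlc(X,\D)$. The hypothesis $\dim X_{\mathrm{nlc}}\leq 1$ enters in an essential way here: it ensures that after restricting to general hyperplane sections of $X$, either $V$ already has low dimension or $X_{\mathrm{nlc}}$ misses a general point of $V$, so that the de Fernex--Hacon--style construction (originally designed for klt pairs) can be modified without the pre-existing non-lc locus of $X$ leaking into the new non-lc locus of $(X,\D)$ and contaminating the isolation of $V$.

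The principal obstacle, I expect, lies precisely in the last point: adapting de Fernex--Hacon's arguments so that they produce a $\D$ whose $K_X+\D$ is $\QQ$-Cartier (which is delicate on merely normal $X$, where not every Weil divisor is $\QQ$-Cartier) and whose non-lc locus has $V$ as an \emph{isolated} component, rather than one glued to the ambient $X_{\mathrm{nlc}}$. The bound $\dim X_{\mathrm{nlc}}\leq 1$ is the assumption that makes this surgery possible: if $\dim V \geq 2$, a generic hyperplane section of $X$ already separates $V$ from $X_{\mathrm{nlc}}$, and if $\dim V\leq 1$, one is in the base case where the non-lc ideal can be controlled at a single point or curve and Fujino's vanishing applies directly. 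This inductive cutting, together with the Ambro--Fujino vanishing, should close the argument.
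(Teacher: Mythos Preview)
Your overall architecture --- replace multiplier ideals by non-lc ideal sheaves, invoke Ambro--Fujino vanishing, and derive a contradiction from a component $V\subseteq \B_+(D)$ with $D_{|V}$ big --- is exactly the paper's. The execution, however, diverges in two places, and your version is both more complicated and has loose ends.

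First, the paper never tries to isolate $V$ as a lone non-lc centre via Seshadri estimates or tie-breaking. Instead it proves two preparatory facts: (a) a de~Fernex--Hacon-style maximality statement producing a fixed boundary $\D$ with $K_X+\D$ $\QQ$-Cartier and $\Nlc(X,\D)=X_{\mathrm{nlc}}$ on the nose; and (b) a lemma showing that for any effective pair $(X,\D)$ and any effective Cartier divisor $D'$ one has, set-theoretically, $\Bs|D'|\cup\Nlc(X,\D)=\mathcal{Z}(\mathcal{J}_{NLC}(X,\D+E))$ for a suitable sum $E$ of general members of $|D'|$. Applying (b) with $D'\in |p(aD-2A)|$ (so that $\Bs|D'|=\B_+(D)$) yields a non-lc subscheme $Z$ with support $\B_+(D)\cup X_{\mathrm{nlc}}$. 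The hypothesis $\dim X_{\mathrm{nlc}}\le 1$ is used only at this point, and only to observe that since $\dim V\ge 1$ (no isolated points in $\B_+(D)$) and $V$ is already a component of $\B_+(D)$, it remains an irreducible component of $Z$; there is no hyperplane cutting and no induction. Vanishing then gives a surjection $H^0(X,\O_X(mD-qA))\twoheadrightarrow H^0(Z,\O_Z(mD-qA))$, and the contradiction comes from the standard Nakamaye/Lazarsfeld argument that when the defining ideal is \emph{integrally closed} and $D_{|V}$ is big, the map $H^0(Z,\O_Z(mD-qA))\to H^0(V,\O_V(mD-qA))$ is eventually nonzero --- the paper checks separately that $\mathcal{J}_{NLC}$ is integrally closed.

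Second, in your sketch the divisor $\D_0\sim_{\QQ}\lambda D$ is not the right object: $D_{|V}$ being big works \emph{against} forcing $V$ into the non-lc locus of something $\sim_{\QQ}\lambda D$. What actually carries $V$ is $V\subseteq\Bs|p(aD-2A)|$, so the auxiliary divisor must be built from $|aD-2A|$, not from $D$ alone; this is precisely what (b) does. Your proposed tie-breaking and hyperplane surgery to separate $V$ from $X_{\mathrm{nlc}}$ would in addition run into the usual difficulty that on a merely normal $X$ one cannot freely perturb boundaries while keeping $K_X+\D$ $\QQ$-Cartier; the paper sidesteps this entirely via (a). In short, the key simplification you are missing is that $V$ need not be isolated, only an irreducible component of the non-lc locus, and integral closedness of $\mathcal{J}_{NLC}$ handles the rest.
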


\vskip .2cm

This easily gives the following

\begin{bcor}
\label{dim}
Let $X$ be a normal projective variety such that $\dim \Sing(X) \leq 1$ or $\dim X \leq 3$ or there exists an effective Weil $\QQ$-divisor $\D$ such that $(X, \D)$ is log canonical. 

Let $D$ be a big and nef $\QQ$-Cartier $\QQ$-divisor on $X$. Then 
\[ \B_+(D)=\Null(D). \]
\end{bcor}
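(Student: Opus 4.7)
The plan is to verify, under each of the three listed hypotheses, that $\dim X_{\mathrm{nlc}}\le 1$; Theorem~\ref{main} then yields $\B_+(D)=\Null(D)$ in every case.

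If there exists an effective Weil $\QQ$-divisor $\Delta$ such that $(X,\Delta)$ is log canonical, then $\mathrm{Nlc}(X,\Delta)=\emptyset$, and hence $X_{\mathrm{nlc}}\subseteq\mathrm{Nlc}(X,\Delta)=\emptyset$ directly from the definition. Next, since $X$ is normal one has $\codim_X\Sing(X)\ge 2$, so the hypothesis $\dim X\le 3$ forces $\dim\Sing(X)\le 1$ and reduces this case to the one treated below.

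It remains to handle the case $\dim\Sing(X)\le 1$. I would prove the stronger inclusion $X_{\mathrm{nlc}}\subseteq\Sing(X)$, which immediately gives the required dimension bound. For this it is enough to show that every smooth point $x\in X$ can be witnessed as lying outside $\mathrm{Nlc}(X,\Delta)$ for some effective Weil $\QQ$-divisor $\Delta$ with $K_X+\Delta$ $\QQ$-Cartier. Concretely, it suffices to exhibit such a $\Delta$ whose support misses $x$: then $(X,\Delta)$ is automatically log canonical at the smooth point $x\notin\Supp(\Delta)$, so $x\notin\mathrm{Nlc}(X,\Delta)\supseteq X_{\mathrm{nlc}}$.

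The construction of such a $\Delta$ is the technical heart of the argument. The idea is to start from a fixed effective Weil $\QQ$-divisor $\Delta_0$ with $K_X+\Delta_0$ $\QQ$-Cartier and then add a $\QQ$-Cartier $\QQ$-divisor $C$ so that $\Delta:=\Delta_0+C$ is still effective but has support disjoint from $x$; the sum $K_X+\Delta=(K_X+\Delta_0)+C$ remains $\QQ$-Cartier automatically. A suitable $C$ can be produced by a Bertini-type argument using a sufficiently positive very ample linear system, which provides enough flexibility to cancel the coefficients of $\Delta_0$ along each prime divisor through $x$ while preserving non-negativity of the remaining coefficients. The main obstacle is precisely the existence of this modifier $C$: once it is established, the inclusion $X_{\mathrm{nlc}}\subseteq\Sing(X)$ follows and Theorem~\ref{main} completes the proof.
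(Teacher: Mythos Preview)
Your overall strategy matches the paper's exactly: verify $\dim X_{\mathrm{nlc}}\le 1$ under each hypothesis and invoke Theorem~\ref{main}. The paper's proof is even terser than yours: it handles the log canonical case implicitly (since then $X_{\mathrm{nlc}}=\emptyset$), observes that $\dim X\le 3$ forces $\dim\Sing(X)\le 1$ by normality, and for the remaining case simply \emph{cites} the inclusion $X_{\mathrm{nlc}}\subseteq\Sing(X)$ from \cite[Rmk~4.8]{cd} rather than proving it.

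Where you differ is in attempting a direct proof of $X_{\mathrm{nlc}}\subseteq\Sing(X)$, and here your write-up leaves a self-acknowledged gap (the ``modifier $C$''). Your route through a fixed $\Delta_0$ and a Bertini-type correction is more convoluted than necessary and the phrase ``cancel the coefficients of $\Delta_0$ along each prime divisor through $x$'' is problematic, since those components need not be $\QQ$-Cartier. The clean fix bypasses $\Delta_0$ entirely: for a very ample Cartier divisor $H$ and $n\gg 0$, the reflexive sheaf $\mathcal{O}_X(nH-K_X)\cong\mathcal{O}_X(-K_X)\otimes\mathcal{O}_X(nH)$ is globally generated, so there is a section not vanishing at the smooth point $x$; the corresponding effective Weil divisor $\Delta$ satisfies $K_X+\Delta\sim nH$ (hence Cartier) and $x\notin\Supp(\Delta)$, giving $x\notin\Nlc(X,\Delta)$ as you wanted. (If you insist on your two-step formulation, the same argument applied to $\mathcal{O}_X(m\Delta_0+nH)$ produces an effective $E\sim m\Delta_0+nH$ avoiding $x$, and then $C:=\tfrac{1}{m}E-\Delta_0$ is $\QQ$-Cartier because $E-m\Delta_0\sim nH$ and linear equivalence to a Cartier divisor forces Cartier.)
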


\vskip .2cm

Moreover, using a striking result of Gibney, Keel and Morrison \cite[Thm. 0.9]{gkm}, we can give a very quick application to the moduli space of stable pointed curves.
 
\begin{bcor}
\label{mod}
Let $g \geq 1$ and let  $D$ be a big and nef  $\QQ$-divisor on $\overline{M}_{g,n}$. Then 
\[ \B_+(D) \subseteq \partial \overline{M}_{g,n}. \]
\end{bcor}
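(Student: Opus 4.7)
The approach is to combine Corollary \ref{dim} with the theorem of Gibney--Keel--Morrison. First, $\overline{M}_{g,n}$ has at worst finite quotient singularities; in particular it is $\QQ$-factorial and the trivial pair $(\overline{M}_{g,n},0)$ is Kawamata log terminal, hence log canonical. Thus the third hypothesis of Corollary \ref{dim} (existence of an effective Weil $\QQ$-divisor $\D$ with $(\overline{M}_{g,n},\D)$ log canonical) is satisfied with $\D=0$, and we obtain
\[
\B_+(D)=\Null(D).
\]
It therefore suffices to verify the inclusion $\Null(D)\subseteq\partial\overline{M}_{g,n}$, i.e.\ that every positive-dimensional subvariety $V\subseteq\overline{M}_{g,n}$ with $D^{\dim V}\cdot V=0$ is contained in the boundary.

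The plan for this second step is to invoke \cite[Thm.~0.9]{gkm}, which classifies, for $g\geq 1$, the fibrations of $\overline{M}_{g,n}$ as factoring through one of the forgetful maps $\overline{M}_{g,n}\to\overline{M}_{g,I}$. Suppose for contradiction that some $V\in\Null(D)$ meets the interior $M_{g,n}$. The vanishing $D^{\dim V}\cdot V=0$ for the nef class $D|_V$ forces its numerical dimension to be strictly less than $\dim V$, so the associated nef reduction of $D|_V$ produces a non-trivial fibration on (a modification of) $V$. This fibration can then be compared with the GKM classification, either by extending it to a fibration of (a modification of) $\overline{M}_{g,n}$ or by tracing its fibers into the interior; in either case, since $V$ meets $M_{g,n}$ non-trivially, the resulting fibration cannot factor through any forgetful map $\overline{M}_{g,n}\to\overline{M}_{g,I}$, contradicting \cite[Thm.~0.9]{gkm}. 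Hence $V\subseteq\partial\overline{M}_{g,n}$.

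The main obstacle will be making the last step rigorous: one needs to pass from the abstract nef reduction of $D|_V$ on a subvariety to a genuine fibration of $\overline{M}_{g,n}$ (or of a birational model) to which GKM directly applies, and to verify that when $V$ meets the interior the resulting fibration indeed violates the forgetful-map classification. Once this translation is in place, the two steps assemble into the desired chain $\B_+(D)=\Null(D)\subseteq\partial\overline{M}_{g,n}$.
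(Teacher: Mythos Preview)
Your first step is exactly the paper's: $(\overline{M}_{g,n},0)$ is klt (the paper cites \cite[Lemma~10.1]{bchm}), so Theorem~\ref{main} (equivalently Corollary~\ref{dim}) yields $\B_+(D)=\Null(D)$.

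The second step, however, has a genuine gap, and you have identified it yourself. The paper does not argue at all here: it simply cites \cite[Thm.~0.9]{gkm} as \emph{directly} giving the inclusion $\Null(D)\subseteq\partial\overline{M}_{g,n}$ for any nef $D$ when $g\geq 1$. You have recalled the fibration statement from \cite{gkm}, but the result the paper invokes is the stronger assertion (proved in \cite{gkm} as part of the same circle of ideas) that the exceptional locus of a nef divisor on $\overline{M}_{g,n}$ lies in the boundary; no bridge via nef reduction is required.

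Your proposed bridge is in any case problematic, not merely incomplete. The nef reduction of $D_{|V}$ is only an almost-holomorphic rational map defined on a birational model of $V$, not a morphism, and there is no general mechanism for extending a fibration on (a model of) a subvariety $V$ to a fibration of the ambient $\overline{M}_{g,n}$, which is what one would need in order to apply the classification of fibrations. Even the step ``$D^{\dim V}\cdot V=0$ forces a nontrivial fibration of $V$'' is delicate: a nef non-big divisor need not induce a map with positive-dimensional image. So the route you sketch would require substantial additional input and does not obviously close. The correct fix is to go back to \cite{gkm} and use the exceptional-locus statement directly, as the paper does.
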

Thus, for example, one gets new compactifications of $M_{g,n}$ by taking rational maps associated to such divisors.
 
The second base locus associated to any pseudoeffective  $\RR$-Cartier $\RR$-divisor $D$, measuring how far $D$ is from being nef, is the restricted base locus \cite[Def. 1.12]{elmnp1}.

\begin{defn}
{\rm Let $X$ be a normal projective variety and let $D$ be a pseudoeffective $\RR$-Cartier $\RR$-divisor on $X$. The {\bf restricted base locus} of $D$ is
\[ \B_-(D)= \bigcup_{A {\ \rm ample}} \B(D+A) \]
where $A$ runs among all ample $\RR$-Cartier $\RR$-divisors such that $D+A$ is a $\QQ$-divisor.}
\end{defn}
Restricted base loci are countable unions of subvarieties by \cite[Prop. 1.19]{elmnp1}, but not always closed \cite[Thm. 1.1]{les}.

For a big $\QQ$-divisor $D$ on a {\it smooth} variety $X$, the subvarieties of $\B_-(D)$ are precisely described in \cite[Prop. 2.8]{elmnp1} (also in positive characteristic in \cite[Thm. 6.2]{mus}) in terms of asymptotic valuations. 

\begin{defn} 
\label{as1}
{\rm (\cite[Def. III.2.1]{nak}, \cite[Lemma 3.3]{elmnp1}, \cite[\S 1.3]{bbp}, \cite[\S 2]{dh})
Let $X$ be a normal projective variety, let  $D$ be an $\RR$-Cartier $\RR$-divisor on $X$ and let $v$ be a divisorial valuation on $X$, that is $v$ is a positive integer multiple of the valuation associated to a prime divisor $\Gamma$ lying on a birational model $f : Y \to X$. The center of $v$ on $X$ is $c_X(v)=f(\Gamma)$. 

If $D$ is big, we set
\[ v(\|D\|) = \inf \{v(E), E \ \mbox{effective} \ \RR\mbox{-Cartier} \ \RR\mbox{-divisor on X such that} \ E \eqv D\}; \]
if $D$ is pseudoeffective, we pick an ample divisor $A$ and set
\[ v(\|D\|) = \lim_{\varepsilon \to 0^+} v(\|D + \varepsilon A\|). \]
If  $D$ is a $\QQ$-Cartier $\QQ$-divisor such that $\kappa(D) \geq 0$ and $b \in \NN$ is such that $bD$ is Cartier and $|bD| \neq \emptyset$, we set
(see \cite[Def. 2.14]{cd} or \cite[Def. 2.2]{elmnp1} for the case $D$ big)
\[ v(\langle D \rangle) = \lim_{m \to + \infty}\frac{v(|mbD|)}{mb} \]
where, if $g$ is an equation, at the generic point of $c_X(v)$, of a general element in $|mbD|$, then $v(|mbD|) = v(g)$.}
\end{defn}

Now the main content of \cite[Prop. 2.8]{elmnp1} is that, given a discrete valuation $v$ on a {\it smooth} $X$ with center $c_X(v)$ and a big divisor $D$, then $c_X(v) \subseteq \B_-(D)$ if and only if $v(\|D\|) > 0$. Using the main result of \cite{cd} we give a generalization to normal pairs with klt singularities.

\begin{bthm}
\label{main2}
Let  $X$ be a normal projective variety such that there exists an effective Weil $\QQ$-divisor $\D$ with $(X,\D)$ a klt pair. Let $v$ be a divisorial valuation on $X$.
Then
\begin{itemize}
\item [(i)] If $D$ is a big Cartier divisor on $X$ we have 
\[ v(\langle D \rangle) > 0 \ \mbox{if and only if} \ c_X(v)\subseteq \B_-(D) \ \mbox{if and only if} \ \limsup_{m \to + \infty} v(|mD|) = + \infty. \]
\item [(ii)] If $D$ is a pseudoeffective $\RR$-Cartier $\RR$-divisor on $X$, we have
\[ v(\|D\|) > 0 \ \textrm{if and only if} \ c_X(v)\subseteq \B_-(D). \]
\end{itemize}
\end{bthm}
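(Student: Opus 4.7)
The plan is to adapt to the klt-pair setting the proof of the smooth case \cite[Prop. 2.8]{elmnp1}. That argument rests on a Nadel-type global generation statement for the asymptotic multiplier ideal sheaves $\J(\|D\|)$; the main result of \cite{cd} provides the analogous statement for the asymptotic multiplier ideal sheaves $\J((X,\Delta);\|D\|)$ attached to a klt pair $(X,\Delta)$. With that ingredient in hand the ELMNP machinery transfers, but one must carefully track the three asymptotic quantities $v(\|D\|)$, $v(\langle D\rangle)$ and the integer sequence $v(|mD|)$, and must also handle pseudoeffective (not only big) divisors in part (ii).

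For part (ii) I would prove the two directions separately. The implication $v(\|D\|)>0\Rightarrow c_X(v)\subseteq \B_-(D)$ is the easier one: $c_X(v)\subseteq \B_-(D)$ is equivalent to $c_X(v)\subseteq \B(D+A)$ for every small ample $\QQ$-Cartier $\QQ$-divisor $A$ making $D+A$ a $\QQ$-divisor, and the hypothesis $v(\|D\|)>0$ passes to $v(\|D+A\|)>0$ through the definition of $\|\cdot\|$ via ample perturbation, which forces $v$ to be positive on every effective $\QQ$-divisor $\QQ$-linearly equivalent to $D+A$. For the converse, assume $v(\|D\|)=0$, fix a very ample divisor $H$, and invoke the global generation statement from \cite{cd} for $\J((X,\Delta);\|m(D+A)\|)\otimes\O_X(K_X+\Delta+m(D+A)+H)$. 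Since $v(\|D\|)=0$, for $m$ sufficiently divisible and $A$ sufficiently small and ample the asymptotic multiplier ideal does not vanish along $c_X(v)$; the resulting global section produces an effective divisor in the class of $m(D+A)+K_X+\Delta+H$ not vanishing on $c_X(v)$, and a Fujita-type rescaling trick (absorbing $K_X+\Delta+H$ into another small ample perturbation $A'$) yields $c_X(v)\not\subseteq\B(D+A')$, hence $c_X(v)\not\subseteq \B_-(D)$.

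Part (i) is the specialization to a big Cartier $D$: the equivalence $v(\langle D\rangle)>0\iff c_X(v)\subseteq\B_-(D)$ follows from the same argument applied directly to $|mD|$, once the comparison between $v(\langle D\rangle)$ and $v(\|D\|)$ recorded in Definitions \ref{as1}--\ref{as3} is spelled out. For the $\limsup$ characterization one uses a dichotomy: if $v(\langle D\rangle)>0$ then $v(|mD|)\geq m\,v(\langle D\rangle)\to+\infty$, whereas if $v(\langle D\rangle)=0$ the same global generation theorem produces, for every $m$, a section of $|mD+H|$ whose $v$-order is bounded by a constant depending only on $H$, forcing $v(|mD|)$ itself to be bounded and so $\limsup v(|mD|)<+\infty$. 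The main obstacle I expect is verifying that \cite{cd} delivers the global generation statement with the uniformity required in $m$ and in the perturbation $A$, and that the passage from a section of $K_X+\Delta+m(D+A)+H$ back to the class $D+A'$ does not destroy the positivity control along $c_X(v)$; once this is in place, the argument parallels \cite[Prop. 2.8]{elmnp1} quite closely.
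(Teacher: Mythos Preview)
Your proposal has a genuine gap at the heart of the hard direction. You assert that ``since $v(\|D\|)=0$, for $m$ sufficiently divisible and $A$ sufficiently small and ample the asymptotic multiplier ideal does not vanish along $c_X(v)$'', and the analogous claim appears in your treatment of the $\limsup$ equivalence. But this implication is exactly the content of the theorem, not a consequence of the hypothesis. The zero locus $\mathcal{Z}(\J((X,\Delta);\|pD\|))$ is governed by \emph{all} divisorial valuations whose center contains $c_X(v)$, not just by $v$ itself; knowing $v(\|D\|)=0$ does not rule out the existence of some other $v_0$ with $c_X(v)\subseteq c_X(v_0)$ and $v_0(\|D\|)>0$ that forces the multiplier ideal to vanish along $c_X(v)$. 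In the smooth case of \cite[Prop.~2.8]{elmnp1} this loop is closed by the inclusion $b(|pD|)\subseteq\J(\|pD\|)$, which yields $v(\langle D\rangle)\geq v(\J(\|pD\|))/p$ and hence $v(\langle D\rangle)>0$ whenever the multiplier ideal vanishes on $c_X(v)$. For a klt pair $(X,\Delta)$ with $\Delta\neq 0$ or $X$ singular, the analogous inclusion $b(|pD|)\subseteq\J((X,\Delta);\|pD\|)$ can fail (negative discrepancies along components of $\Delta$ or exceptional divisors obstruct it), and you offer no substitute.

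The paper closes this gap by a different route: it first proves the statement on smooth varieties (where the classical inclusion is available), and then transfers it to arbitrary normal $X$ via \emph{Izumi's theorem}. Concretely, given $c_X(v)\subseteq\Nna(D)$ one picks a $v_0$ with $v_0(\langle D\rangle)>0$ and $c_X(v)\subseteq c_X(v_0)$, passes to a smooth model, and uses Izumi's comparison of divisorial valuations with the same closed-point center to deduce $v(\langle D\rangle)>0$; a further pullback argument handles non-closed centers. Only \emph{after} this equivalence is in hand does the paper run the Nadel-vanishing/global-generation argument you sketch, and it runs it solely for the $\limsup$ bound, using the already-proved implication $v(\langle D\rangle)=0\Rightarrow c_X(v)\not\subseteq\B_-(D)$ together with \cite[Cor.~5.2]{cd} to conclude that $c_X(v)\not\subseteq\mathcal{Z}(\J((X,\Delta);\|pD\|))$. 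Your outline for the $\limsup$ part and for the easy direction of (ii) is fine, but without the Izumi step (or an alternative proof of $b_p\subseteq\J((X,\Delta);\|pD\|)$ in the klt setting) the core equivalence remains unproved.
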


\vskip .2cm

\noindent {\it Acknowledgments}. We wish to thank Lorenzo Di Biagio for some helpful discussions.

\section{Non-lc ideal sheaves}
\label{basic}

\begin{notn}
{\rm Throughout the article we work over the complex numbers. Given a variety $X$ and a coherent sheaf of ideals $\mathcal{J} \subset  \mathcal{O}_X$, we denote by $\mathcal{Z}(\mathcal{J})$ the closed subscheme of $X$ defined by $\mathcal{J}$. If $X$ is a normal projective variety and $\D$ is a Weil $\QQ$-divisor on $X$,  we call $(X, \D)$ a {\bf pair} if $K_X + \D$ is $\QQ$-Cartier. We refer to \cite[Def. 2.34]{km} for the various notions of singularities of pairs.}
\end{notn} 

\begin{defn}
{\rm Let $X$ be a normal projective variety and let $\D = \sum\limits_{i=1}^s d_i D_i$ be a Weil $\QQ$-divisor on $X$, where the ${D_i}'s$ are distinct prime divisors. 

Given $a \in \RR$ we set $\D^{>a}=\sum\limits_{1 \leq i \leq s : d_i>a} d_i D_i$, $\D^+=\D^{>0}$, $\D^- =(-\D)^+$ and $\D^{<a}=-((-\D)^{>-a})$. The {\bf round up} of $\D$ is $\lceil \D \rceil = \sum\limits_{i=1}^s \lceil d_i \rceil D_i$ and the {\bf round down} is $\lfloor \D \rfloor = \sum\limits_{i=1}^s \lfloor d_i \rfloor D_i$. We also set $\D^{\#} =  \D^{<-1} + \D^{>-1}$.}
\end{defn} 

The following is easily proved.

\begin{rem}
\label{sha}
{\rm Let $X$ be a normal projective variety and let $\D, \D'$ be Weil $\QQ$-divisors on $X$. Then
\begin{itemize}
\item[(i)] $\lceil  (-\D)^{\#} \rceil = \lceil -(\D^{<1}) \rceil - \lfloor \D^{>1} \rfloor$; 
\item[(ii)] If $\D \leq \D'$, then $\lceil \D^{\#} \rceil \leq \lceil  (\D')^{\#} \rceil$.
\end{itemize}}
\end{rem}

We recall the definition of non-lc ideal sheaves \cite[Def. 4.1]{a}, \cite[Def. 2.1]{f}.

\begin{defn}
\label{inlc}
{\rm Let $(X, \D)$ be a pair and let $f: Y \to X$ be a resolution  of $X$ such that $\D_Y :=f^{\ast}(K_X+\D)-K_Y$ has simple normal crossing support.
The {\bf non-lc ideal sheaf associated to} $(X,\D)$ is
\[ \mathcal{J}_{NLC}(X,\D)=f_{\ast}\mathcal{O}_Y(\lceil - (\D_Y^{<1}) \rceil -  \lfloor \D_Y^{>1}  \rfloor). \]}
\end{defn}

\begin{rem}
\label{nlc}
{\rm Non-lc ideal sheaves are well-defined by \cite[Prop. 2.6]{f}, \cite[Rmk. 4.2(iv)]{a}. Moreover, when $\D$ is effective and $f : Y \to X$ is a log-resolution of $(X, \D)$, we have that the non-lc locus of $(X, \D)$ is, set-theoretically, $\Nlc(X,\D)= f(\Supp(\D_Y^{>1})) = \mathcal{Z}(\mathcal{J}_{NLC}(X,\D))$ \cite[Lemma 2.2]{f}.}
\end{rem}

\begin{rem}
\label{ic}
{\rm The non-lc ideal sheaf of a pair $(X,\D)$ with $\D$ effective is an integrally closed ideal.}
\begin{proof}
With notation as in Definition \ref{inlc}, set $G =  \lceil - (\D_Y^{<1})  \rceil$ and $N=  \lfloor \D_Y^{>1}  \rfloor$, so that $G$ and $N$ are effective divisors without common components, $G$ is $f$-exceptional and $\mathcal{J}_{NLC}(X,\D)=f_{\ast}\mathcal{O}_Y(G-N) = f_{\ast}\mathcal{O}_Y(-N)$ by Fujita's lemma \cite[Lemma 2.2]{fta}, \cite[Lemma 1-3-2]{kmm}, \cite[Lemma 4.5]{dh}. Therefore $\mathcal{J}_{NLC}(X,\D)$ is an ideal sheaf and it is integrally closed by \cite[Prop. 9.6.11]{laz}.
\end{proof}
\end{rem}

Our next goal is to prove, using techniques and results in de Fernex-Hacon \cite{dh}, that non-lc ideal sheaves have a unique maximal element. To this end we will use some results of Fujino \cite{f} and de Fernex-Hacon \cite{dh} that we wish to recall for the reader's convenience.

\begin{lemma} \cite[Lemma 2.7]{f} 
\label{fuj}
Let $g : Y' \to Y$ be a proper birational morphism between smooth varieties and let $B_Y$ be an $\RR$-divisor on $Y$ having simple normal crossing support. Assume that  $B_{Y'} := g^{\ast}(K_Y+B_Y)-K_{Y'}$ also has simple normal crossing support. Then 
\[ g_{\ast} \O_{Y'} (\lceil -(B_{Y'}^{<1}) \rceil - \lfloor B_{Y'}^{>1} \rfloor ) \cong \O_Y (\lceil -(B_Y^{<1}) \rceil - \lfloor B_Y^{>1} \rfloor ). \]
\end{lemma}

\begin{defn} 
\label{dfh1}
{\rm (\cite[Def. 3.1]{dh})
Let $f: Y \to X$ be a proper birational morphism between normal varieties and let  $K_Y$ be a canonical divisor on $Y$ and $K_X = f_{\ast} K_Y$. For every $m \geq 1$ define $K_{m, Y/X} = K_Y - \frac{1}{m}f^{\natural}(mK_X)$, where $f^{\natural}(mK_X)$ is the divisor on $Y$ such that $\O_Y(-f^{\natural}(mK_X)) = (\O_X(-mK_X) \cdot \O_Y)^{\vee \vee}$.}
\end{defn}

\begin{lemma}
\label{dh2}
Let $m \geq 1$. In (i)-(iv) below let $f: Y \to X$ be a proper birational morphism between normal varieties. Then
\begin{itemize}
\item [(i)]  If $X$ is Gorenstein then $K_{m, Y/X} = K_{Y/X} := K_Y + f^{\ast}(-K_X)$;
\item [(ii)]  \cite[Rmk 3.3]{dh} For all $q \geq 1$ we have $K_{m, Y/X} \leq K_{mq, Y/X}$;
\item [(iii)] \cite[Lemma 3.5]{dh} Assume that $mK_Y$ is Cartier and $\O_X(-mK_X) \cdot \O_Y$ is invertible. Let $Y'$ be a normal variety and let $g : Y' \to Y$ be a proper birational morphism. Then $K_{m, Y'/X} = K_{m, Y'/Y} + g^{\ast} K_{m, Y/X}$;
\item [(iv)] \cite[Rmk 3.9]{dh} Let $(X,\D)$ be a pair with $\D$ effective and assume that  $m(K_X + \Delta)$ is Cartier. Then $K_Y + f_{\ast}^{-1}(\D) - f^{\ast}(K_X+\D) \leq K_{m, Y/X}$;
\item [(v)] \cite[Thm. 5.4 and its proof]{dh} For every $m \geq 2$ there exist a log-resolution $f: Y \to X$ of $(X, \mathcal{O}_X(-mK_X))$ and a Weil $\QQ$-divisor $\D_m$ on $X$ such that $m\D_m$ is integral, $\lfloor \D_m \rfloor = 0$, $(\D_m)_Y$ has simple normal crossing support, $f$ is a log-resolution for the log-pair $((X, \D_m), \mathcal{O}_X(-mK_X))$, $K_X + \D_m$ is $\QQ$-Cartier and $K_{m, Y/X} = K_Y + f_{\ast}^{-1}(\D_m) - f^{\ast}(K_X+\D_m)$.
\end{itemize}
\end{lemma}
In (iv) and (v) above $f_{\ast}^{-1}(\D)$ is the proper transform of $\D$. Note that our $\D_Y$ (Definition \ref{inlc}) is different from the one in \cite[Def. 3.8]{dh}.

Now we have

\begin{prop} 
\label{max}
Let $X$ be a normal projective variety. Then there exists a Weil $\QQ$-divisor $\D_0$ on $X$ such that $\lfloor \D_0 \rfloor = 0$,
$K_X + \D_0$ is $\QQ$-Cartier and 
\[ \mathcal{J}_{NLC}(X,\D) \subseteq \mathcal{J}_{NLC}(X,\D_0) \]
for every pair $(X,\D)$ with $\D$ effective.
\end{prop}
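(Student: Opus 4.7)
The approach is to adapt de Fernex--Hacon's construction \cite{dh} of a canonical boundary realizing a maximal multiplier ideal, with modifications replacing multiplier ideals by non-lc ideals. The starting observation is a monotonicity principle: Remark \ref{sha}(i) lets one rewrite
\[ \mathcal{J}_{NLC}(X, \D) = f_{\ast}\mathcal{O}_Y\bigl(\lceil (-\D_Y)^{\#} \rceil\bigr), \]
so Remark \ref{sha}(ii) implies that, on a common log-resolution $f:Y\to X$ of $(X,\D)$ and $(X,\D_0)$, the coefficient-wise inequality $\D_{0,Y} \leq \D_Y$ forces $\mathcal{J}_{NLC}(X, \D) \subseteq \mathcal{J}_{NLC}(X, \D_0)$. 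The proposition therefore reduces to exhibiting a single pair $(X,\D_0)$ with $\lfloor\D_0\rfloor=0$ whose pullback $f^{\ast}(K_X+\D_0)$ is pointwise minimal among $f^{\ast}(K_X+\D)$ for all effective pairs.

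The heart of the proof is a \emph{joining lemma}: given two effective pairs $(X,\D_1), (X,\D_2)$, there exists a pair $(X,\D_3)$ with $\lfloor\D_3\rfloor=0$ such that
\[ \mathcal{J}_{NLC}(X,\D_1) + \mathcal{J}_{NLC}(X,\D_2) \subseteq \mathcal{J}_{NLC}(X,\D_3). \]
I would prove this by fixing $m$ divisible enough that both $m(K_X+\D_i)$ are Cartier, passing to a common log-resolution $f:Y\to X$, and adapting the de Fernex--Hacon $m$-compatible boundary technique: descend to $X$ a judiciously chosen $\QQ$-Cartier boundary whose $f$-pullback is bounded above by $\min\{\D_{1,Y}, \D_{2,Y}\}$, and apply a rounding/Bertini-type perturbation (adding a small general ample $\QQ$-Cartier divisor if necessary) to ensure the resulting boundary $\D_3$ has $\lfloor \D_3 \rfloor = 0$ while retaining the domination of non-lc ideals.

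Once the joining lemma is in place, a Noetherian stabilization argument finishes the proof. The family $\{\mathcal{J}_{NLC}(X,\D)\}$, as $\D$ ranges over effective Weil $\QQ$-divisors with $K_X+\D$ a $\QQ$-Cartier divisor, is a collection of coherent ideal sheaves in $\mathcal{O}_X$, upward-directed under inclusion by the joining lemma. The ascending chain condition on the Noetherian scheme $X$ then supplies a maximal element; by the construction in the joining lemma it is realized by a boundary $\D_0$ with $\lfloor\D_0\rfloor=0$, which is the desired divisor.

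The principal obstacle is the joining lemma. Naive candidates such as $\D_3 = \tfrac{1}{2}(\D_1+\D_2)$ produce a $\QQ$-Cartier class $K_X+\D_3$ as the average of two $\QQ$-Cartier divisors, but the resulting pullback is an average of $\D_{1,Y}$ and $\D_{2,Y}$, not a lower bound for their minimum. Overcoming this obstruction is precisely where the $m$-compatible boundary machinery of \cite{dh} is required: one must select general divisors in the linear systems $|m(K_X+\D_i)|$ whose log-pullbacks are simultaneously minimized along every relevant exceptional divisor, and verify that the resulting $\QQ$-Cartier $K_X+\D_3$ can be represented by an effective boundary with trivial round-down.
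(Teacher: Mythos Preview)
Your overall strategy and the paper's coincide: both rely on de~Fernex--Hacon's $m$-compatible boundaries to produce a dominating pair, and both conclude via the ascending chain condition. The paper, however, bypasses your joining lemma entirely. Instead of comparing pairs two at a time, it defines for each $m\geq 2$ an intrinsic ideal $\mathfrak{a}_m(X)=f_{\ast}\mathcal{O}_Y(\lceil (K_{m,Y/X})^{\#}\rceil)$ built from the de~Fernex--Hacon relative canonical divisor $K_{m,Y/X}$, checks that $\mathfrak{a}_m(X)\subseteq\mathfrak{a}_{mq}(X)$ (from $K_{m,Y/X}\leq K_{mq,Y/X}$), takes the ACC maximum $\mathfrak{a}_{\max}(X)$, identifies it as $\mathcal{J}_{NLC}(X,\D_0)$ for some $\D_0$ with $\lfloor\D_0\rfloor=0$ via \cite[Thm.~5.4]{dh}, and finally uses the inequality $-\D_Y\leq K_{m,Y/X}$ (valid whenever $m(K_X+\D)$ is Cartier, \cite[Rmk.~3.9]{dh}) together with Remark~\ref{sha}(ii) to get $\mathcal{J}_{NLC}(X,\D)\subseteq\mathfrak{a}_{\max}(X)$. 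Your joining lemma, once unpacked, is exactly this last step applied twice, so the two arguments have the same content; the paper's packaging is just more economical because the chain $\{\mathfrak{a}_m(X)\}$ is already totally ordered by divisibility.

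One imprecision in your sketch is worth flagging: you ask for a boundary $\D_3$ with $(\D_3)_Y\leq\min\{\D_{1,Y},\D_{2,Y}\}$ coefficient-wise, but the de~Fernex--Hacon boundary $\D_m$ does not literally satisfy this, since $(\D_m)_Y=-K_{m,Y/X}+f_{\ast}^{-1}\D_m$ carries the strict transform term. What actually holds is $-\D_{i,Y}\leq K_{m,Y/X}$, and because $\lceil(-f_{\ast}^{-1}\D_m)^{\#}\rceil=0$ the $\#$-operation erases the discrepancy and the ideal inclusion still follows. Also, your proposed ample/Bertini perturbation to force $\lfloor\D_3\rfloor=0$ is unnecessary (and potentially harmful, since enlarging the boundary shrinks the non-lc ideal): \cite[Thm.~5.4]{dh} already delivers $\lfloor\D_m\rfloor=0$.
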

\begin{proof}
Fix a canonical divisor $K_X$ on $X$ and an integer $m \geq 2$. By Lemma \ref{dh2}(v) there exist a log-resolution $f: Y \to X$ of $(X, \mathcal{O}_X(-mK_X))$ and a Weil $\QQ$-divisor $\D_m$ on $X$ with the properties in (v). In particular $K_{m, Y/X}$ is $f$-exceptional. Now set
\[ \mathfrak{a}_m(X) = f_{\ast}\mathcal{O}_Y( \lceil (K_{m, Y/X})^{\#} \rceil).\]
As in the proof of Remark \ref{ic} we get that $\mathfrak{a}_m(X)$ is a coherent ideal sheaf. Let us check that its definition is independent of the choice of $f$. Let $f': Y' \to X$ be another log-resolution of $(X, \mathcal{O}_X(-mK_X))$ and assume, as we may, that $f'$ factors through $f$ and a morphism $g : Y' \to Y$. By Lemma \ref{dh2}(iii) and (i) we have
$K_{m, Y'/X} = K_{Y'/Y} + g^{\ast} K_{m, Y/X} = K_{Y'} - g^{\ast} (K_Y - K_{m, Y/X})$, whence
\begin{equation}
\label{ben}
(fg)_{\ast}\mathcal{O}_{Y'}(\lceil (K_{m, Y'/X})^{\#} \rceil) = f_{\ast}(g_{\ast} \mathcal{O}_{Y'} (\lceil (K_{Y'} - g^{\ast} (K_Y - K_{m, Y/X}))^{\#} \rceil)).
\end{equation}
Now set $B_Y = - K_{m, Y/X}$ and $B_{Y'} = g^{\ast}(K_Y+B_Y) - K_{Y'}$ so that, using Remark \ref{sha}(i) and Lemma \ref{fuj}, we have
\[ g_{\ast} \mathcal{O}_{Y'} (\lceil (K_{Y'} - g^{\ast} (K_Y - K_{m, Y/X}))^{\#} \rceil) = g_{\ast} \mathcal{O}_{Y'} (\lceil (-B_{Y'})^{\#} \rceil) = g_{\ast} \mathcal{O}_{Y'} (\lceil -(B_{Y'}^{<1}) \rceil - \lfloor B_{Y'}^{>1} \rfloor ) = \]
\[ = \mathcal{O}_Y (\lceil -(B_Y^{<1}) \rceil - \lfloor B_Y^{>1} \rfloor ) = \mathcal{O}_Y (\lceil (-B_Y)^{\#} \rceil) = \mathcal{O}_Y( \lceil (K_{m, Y/X})^{\#} \rceil) \]

and by \eqref{ben} we get
\[ (fg)_{\ast}\mathcal{O}_{Y'}(\lceil (K_{m, Y'/X})^{\#} \rceil) = f_{\ast} \mathcal{O}_Y( \lceil (K_{m, Y/X})^{\#} \rceil) \]
that is $\mathfrak{a}_m(X)$ is well defined.

We now claim that the set $\{\mathfrak{a}_m(X), m \geq 2 \}$ has a unique maximal element. In fact, given $m, q \geq 2$, let $f: Y \to X$ be a log-resolution of $(X, \mathcal{O}_X(-mK_X)) + \mathcal{O}_X(-mqK_X))$. By Lemma \ref{dh2}(ii) and Remark \ref{sha}(ii) we have $\lceil (K_{m, Y/X})^{\#} \rceil \leq \lceil (K_{mq, Y/X})^{\#} \rceil$ and therefore $\mathfrak{a}_m(X) \subseteq \mathfrak{a}_{mq}(X)$. Using the ascending chain condition on ideals we conclude that $\{\mathfrak{a}_m(X), m \geq 2 \}$ has a unique maximal element, which we will denote by $\mathfrak{a}_{\rm max}(X)$. 

Next let us show that all the ideal sheaves $\mathfrak{a}_m(X)$, for $m \geq 2$ (whence in particular also $\mathfrak{a}_{\rm max}(X)$), are in fact non-lc ideal sheaves of a suitable pair. 

Let  $\D_m$ be as above, so that, by Remark \ref{sha}(i) and using $\lceil  (-f_{\ast}^{-1}(\D_m))^{\#} \rceil = 0$, we have
\[ \lceil - ((\D_m)_Y^{<1}) \rceil -  \lfloor (\D_m)_Y^{>1}  \rfloor = \lceil  (-(\D_m)_Y)^{\#} \rceil = \lceil  (K_{m, Y/X} - f_{\ast}^{-1}(\D_m))^{\#} \rceil = \]
\[  = \lceil  (K_{m, Y/X})^{\#}  \rceil + \lceil (- f_{\ast}^{-1}(\D_m))^{\#} \rceil = \lceil  (K_{m, Y/X})^{\#} \rceil \]
whence
\[ \mathcal{J}_{NLC}(X,\D_m)=f_{\ast}\mathcal{O}_Y(\lceil - ((\D_m)_Y^{<1}) \rceil -  \lfloor (\D_m)_Y^{>1}  \rfloor) = f_{\ast}\mathcal{O}_Y( \lceil (K_{m, Y/X})^{\#} \rceil) = \mathfrak{a}_m(X). \]
To finish the proof, let $(X,\D)$ be a pair with $\D$ effective and let $q \in \NN$ be such that $q(K_X + \D)$ is Cartier. Let $m_0 \geq 2$ be such that $\mathfrak{a}_{\rm max}(X) = \mathfrak{a}_{m_0}(X) = \mathfrak{a}_{qm_0}(X)$. By what we proved above, there exists $\D_0 := \D_{qm_0}$ such that $\mathcal{J}_{NLC}(X,\D_0) =  \mathfrak{a}_{\rm max}(X)$. By Lemma \ref{dh2}(iv)  we have that  $- \D_Y \leq K_Y + f_{\ast}^{-1}(\D) - f^{\ast}(K_X+\D) \leq K_{qm_0, Y/X}$, whence also, by Remark \ref{sha} (i) and (ii),
\[ \lceil - (\D_Y^{<1}) \rceil -  \lfloor \D_Y^{>1}  \rfloor = \lceil (- \D_Y)^{\#} \rceil \leq \lceil (K_{qm_0, Y/X})^{\#} \rceil \]
and therefore
\[ \mathcal{J}_{NLC}(X,\D)=f_{\ast}\mathcal{O}_Y(\lceil - (\D_Y^{<1}) \rceil -  \lfloor \D_Y^{>1}  \rfloor) \subseteq f_{\ast}\mathcal{O}_Y(\lceil (K_{qm_0, Y/X})^{\#} \rceil) = \mathfrak{a}_{\rm max}(X) = \mathcal{J}_{NLC}(X,\D_0). \]
\end{proof}

\section{Proof of Theorem \ref{main}}
\label{b+}

We record the following lemma, which is also of independent interest.

\begin{lemma} 
\label{bs}
Let $(X,\D)$ be a pair with $\D$ effective and let $D$ be an effective Cartier divisor on $X$. Then there exists $c = c(X,\D, D) \in \NN$ such that the set-theoretic equality
\[ \Bs|D| \cup \Nlc(X,\D) = \mathcal{Z}(\mathcal{J}_{NLC}(X,\D+E_1+\dots+E_c)) \]
holds for some $E_1,\dots, E_c \in |D|$.
\end{lemma}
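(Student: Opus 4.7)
The plan is to compute on a common log resolution and apply Bertini. Fix a log resolution $f: Y \to X$ of $(X, \D)$, write $\D_Y = \sum_j a_j F_j$, and set $m := \min_j a_j \in \QQ$. I will take $c = c(X, \D) \in \NN$ to be any integer exceeding both $1-m$ and a uniform upper bound on the log discrepancies, with respect to $(X, \D)$, of the ``test divisors'' appearing in the argument below; the existence of such a bound is where Proposition \ref{max} will be needed.

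Given an effective Cartier divisor $D$ on $X$, take a further blowup $\mu: Y'' \to Y$ such that $h := f \circ \mu : Y'' \to X$ is a log resolution of $(X, \D + \mathfrak{b}(|D|))$, and write $h^{-1}\mathfrak{b}(|D|) \cdot \mathcal{O}_{Y''} = \mathcal{O}_{Y''}(-F)$. Since $|h^*D - F|$ is base-point-free on $Y''$, Bertini produces smooth general members $\tilde E_1, \dots, \tilde E_c \in |h^*D - F|$ meeting $\Supp(\D_{Y''} + F)$ and each other transversally. Setting $E_i := h_*(\tilde E_i + F) \in |D|$ yields $h^*E_i = \tilde E_i + F$ and makes $h$ a log resolution of $(X, \D + \sum_i E_i)$. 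By Remark \ref{nlc}, the lemma then reduces to identifying the image under $h$ of those components of $(\D + \sum E_i)_{Y''} = \D_{Y''} + cF + \sum_i \tilde E_i$ whose coefficient exceeds $1$.

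For the inclusion $\supseteq$: the containment $\Nlc(X, \D) \subseteq \Nlc(X, \D + \sum E_i)$ is the monotonicity of $\mathcal{J}_{NLC}$ in the boundary, a direct consequence of Remark \ref{sha}(ii) applied on a common log resolution of $(X, \D)$ and $(X, \D + \sum E_i)$. For $x \in \Bs|D|$, some component $G$ of $\Supp(F)$ satisfies $h(G) \ni x$ (since $\mathfrak{b}(|D|)$ vanishes at $x$), and the coefficient of $G$ in $(\D + \sum E_i)_{Y''}$ is $\mathrm{coeff}_G(\D_{Y''}) + c \cdot \mathrm{mult}_G F$, which exceeds $1$ by the choice of $c$. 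Conversely, if $x \notin \Bs|D| \cup \Nlc(X, \D)$: the general choice of the $\tilde E_i$ ensures that over $h^{-1}(x)$ the divisor $F$ contributes nothing (as $x \notin \Bs|D|$), every component of $\D_{Y''}$ has coefficient $\leq 1$ (as $x \notin \Nlc(X, \D)$), and each $\tilde E_i$ contributes coefficient exactly $1$; hence no component of $(\D + \sum E_i)_{Y''}^{>1}$ maps to $x$.

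The main obstacle is securing the uniformity of $c$ in $D$. The naive bound $c > 1 - m$ controls only components of $\D_{Y''}$ coming from proper transforms of the $F_j$ on $Y$; but $\mu$ can introduce new exceptional divisors whose coefficients in $\D_{Y''}$ are arbitrarily negative. One overcomes this by observing that for each $x \in \Bs|D| \setminus \Nlc(X, \D)$ it suffices to exhibit a \emph{single} divisor $G$ in $\Supp(F)$ lying over $x$ with $\mathrm{coeff}_G(\D_{Y''})$ bounded below: at a smooth point of $X$ one can take $G$ to be the exceptional divisor of the one-point blowup, yielding $\mathrm{coeff}_G(\D_{Y''}) \geq 1 - \dim X$ (together with $\mathrm{mult}_G F \geq 1$ automatically); at a singular point the analogous uniform control is provided by the non-lc ideal machinery of Section \ref{basic}, specifically by Proposition \ref{max}.
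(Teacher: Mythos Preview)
Your overall route—pass to a log resolution, use Bertini on the free part of $|h^*D|$, and read off the non-lc locus from the coefficients of the resulting SNC boundary—is exactly the paper's. The paper, however, sidesteps the uniformity issue you raise: for a given $D$ it fixes a single $f:Y\to X$ that is simultaneously a log resolution of $(X,\Delta)$ \emph{and} of the linear series $|D|$, writes $\Delta_Y^-=\sum_i\delta_iD_i$ on \emph{that} $Y$, and sets $c=\lceil\max_i\delta_i\rceil+2$. So the paper's $c$ is read off from a resolution that already depends on $D$; in the only application (the proof of Theorem~\ref{main}) a single $D$ is used, so this suffices.

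Your attempt to make $c$ genuinely independent of $D$ has a real gap. The appeal to Proposition~\ref{max} at singular points does not do what you need: that proposition produces a maximal element among the ideals $\mathcal{J}_{NLC}(X,\Delta')$, but it says nothing about upper bounds for discrepancies $a(G,X,\Delta)$ of prime divisors $G$ with prescribed small center. Concretely, for $x\in\Bs|D|\cap\Sing(X)\setminus\Nlc(X,\Delta)$ you must exhibit \emph{some} $G$ over $X$ with $x\in c_X(G)$, $\ord_G\mathfrak b(|D|)\geq 1$, and $a(G,X,\Delta)$ bounded above by a constant depending only on $(X,\Delta)$; nothing in Section~\ref{basic} supplies such a $G$, and a divisor $G$ with $x\in c_X(G)$ whose discrepancy you \emph{can} bound (e.g.\ an exceptional divisor of a fixed resolution of $(X,\Delta)$) will typically have $\ord_G\mathfrak b(|D|)=0$ when $c_X(G)\not\subseteq\Bs|D|$. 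Even in the smooth case your phrasing needs a tweak: the exceptional divisor of the blow-up at $x$ need not lie on your chosen $Y''$, hence need not appear in $\Supp F$, so the argument should be run with discrepancies of divisors over $X$ (using $\ord_{E_x}(E_i)=\mathrm{mult}_x(E_i)\geq 1$ directly) rather than with components of $F\subset Y''$. If you are content to let $c$ depend on $D$—which is all the paper ever uses—then the paper's device of resolving $(X,\Delta)$ and $|D|$ simultaneously and defining $c$ from $\Delta_Y^-$ on that common resolution is the cleanest fix.
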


\begin{proof}
Let $f:Y \to X$ be a log-resolution of $(X,\D)$ and of the linear series $|D|$ such that $f_{\ast}^{-1} \D + \Bs|f^{\ast}D| + \Exc(f)$ has simple normal crossing support.
Write $\D_Y = \D_Y^+ - \D_Y^-$, where $\D_Y^+$ and $\D_Y^-$ are effective simple normal crossing support $\QQ$-divisors without common components. Then $\D_Y^-= \sum_{i=1}^s \delta_i D_i$, for some non-negative $\delta_i \in \QQ$ and distinct prime divisors $D_i$'s and define 
\[ c =  \lceil \max\{\delta_i, 1 \leq i \leq s \}  \rceil +2. \]
Moreover we have that $|f^{\ast}D|=|M|+F$, where $|M|$ is base-point free and $\Supp(F)= \Bs|f^{\ast}D|$. By Bertini's Theorem and \cite[Lemma 9.1.9]{laz}, we can choose $M_1,\dots, M_c \in |M|$ general divisors such that, for all $j=1,\dots,c$, $M_j$ is smooth, every component of $M_j$ is not a component of $\D_Y, M_1,\dots, M_{j-1}$ and $\D_Y+M_1+\dots+M_c+F$ has simple normal crossing support. Now, for all $j = 1, \ldots, c$, $M_j+F \in |f^{\ast}D|$, so that there exists $E_j \in |D|$ such that $M_j+F =f^{\ast}E_j$. Set $E=E_1+\dots +E_c$ and notice that $f$ is also a log-resolution of $(X,\D+E)$.

By Remark \ref{nlc} we have $\Nlc(X,\D)=\mathcal{Z}(\mathcal{J}_{NLC}(X,\D))\subseteq \mathcal{Z}(\mathcal{J}_{NLC}(X,\D+E))$, the latter inclusion following by Remark \ref{sha}(i) and (ii), because $E$ is effective. Also, for every prime divisor $\Gamma$ in the support of $F$ we get for the discrepancies
\[ a(\Gamma,X,\D+E)=a(\Gamma,X,\D)-\ord_{\Gamma}(f^{\ast} E) =-\ord_{\Gamma} (\D_Y) - \ord_{\Gamma}(f^{\ast} E) \leq \]
\[ \leq \ord_{\Gamma} (\D_Y^-) - \ord_{\Gamma}(f^{\ast} E) \leq  \max\{\delta_i, 1 \leq i \leq s \} -\ord_{\Gamma}(M_1+\dots+M_c+cF)\leq -2 \]
whence $f(\Gamma) \subseteq \Nlc(X, \D+E)$. As $\Bs|D|$ is the union of such $f(\Gamma)$'s, using Remark \ref{nlc}, we get the inclusion $\Bs |D| \subseteq \Nlc(X, \D+E) = \mathcal{Z}(\mathcal{J}_{NLC}(X,\D+E))$.
 
On the other hand notice that $(\D+E)_Y =f^{\ast}(K_X+\D+E)-K_Y= \D_Y +f^{\ast}E$. Also $\D_Y + f^{\ast} E = \D_Y + M_1+\dots+M_c+cF$, so that
\[ \Supp((\D+E)_Y^{>1}) = \Supp((\D_Y + f^{\ast} E)^{>1}) \subseteq \Supp(F) \cup \Supp (\D_Y^{>1}) \]
whence
\[ f(\Supp((\D+E)_Y^{>1})) \subseteq f(\Supp(F)) \cup f(\Supp (\D_Y^{>1})) = \Bs|D| \cup \Nlc(X,\D). \]
Therefore, by Remark \ref{nlc}, 
\[ \mathcal{Z}(\mathcal{J}_{NLC}(X,\D+E)) = \Nlc(X, \D+E) = f(\Supp((\D+E)_Y^{>1})) \subseteq \Bs|D| \cup \Nlc(X,\D). \]
\end{proof}

Now we essentially follow the proof of Nakamaye's Theorem as in \cite[\S 10.3]{laz} and \cite[Thm. 0.3]{n}.
 
\renewcommand{\proofname}{Proof of Theorem {\rm \ref{main}}}  
\begin{proof}
We can assume that $D$ is a Cartier divisor. The issue is of course to prove that $\B_+(D) \subseteq \Null(D)$, since the opposite inclusion holds on any normal projective variety, as explained in the introduction.

By Proposition \ref{max} and Remark \ref{nlc} there is an effective Weil $\QQ$-divisor $\D$ on $X$ such that $K_X + \D$ is $\QQ$-Cartier and $\mathrm{Nlc}(X,\D)= X_{\mathrm{nlc}}$, so that $\dim \mathrm{Nlc}(X,\D) \leq 1$.

Let $A$ be an ample Cartier divisor such that $A-(K_X+\D)$ is ample. As in \cite[Proof of Thm. 10.3.5]{laz}) we can choose $a, p \in \NN$ sufficiently large such that
\[ \B_+(D)= \B(aD-2A)= \Bs|paD-2pA|. \]
By Lemma \ref{bs} there exist $c \in \NN$ and a Cartier divisor $E$ on $X$ such that
\[ \B_+(D)\cup \Nlc(X,\D) = \mathcal{Z}(\mathcal{J}_{NLC}(X,\D+E)) \]
and $E \equiv c(paD-2pA)=qaD-2qA$, where $q:= cp \in \NN$. 

Set $Z =  \mathcal{Z}(\mathcal{J}_{NLC}(X,\D+E))$. For $m \geq qa$, we get that 
\[ mD-qA-(K_X+\D+E)\equiv (m-qa)D+qA-(K_X+\D) \] 
is ample, whence $H^1(X, \mathcal{J}_{NLC}(X,\D+E) \otimes \mathcal{O}_X(mD-qA))=0$, for $m\geq qa$
by  \cite[Thm. 3.2]{f},  \cite[Thm. 4.4]{a}, so that the restriction map
\begin{equation} 
\label{uno}
H^0(X,\mathcal{O}_X(mD-qA))\to H^0(Z, \mathcal{O}_Z(mD-qA)) \mbox{\ is surjective for} \ m \geq qa.
\end{equation} 
By contradiction let us assume that there exists an irreducible component $V$ of $\B_+(D)$, such that $V \not \subseteq \Null(D)$.
Now $V \subseteq \B_+(D) \subseteq \B(D-\frac{q}{m}A)\subseteq \Bs |mD-qA|$ for $m \in \NN$, whence the restriction map
\[ H^0(X,\mathcal{O}_X(mD-qA))\to H^0(V,\mathcal{O}_V(mD-qA)) \mbox{\ is zero for} \ m  \in \NN \]
and therefore, by \eqref{uno}, also
\begin{equation} 
\label{due}
H^0(Z,\mathcal{O}_Z(mD-qA))\to H^0(V,\mathcal{O}_V(mD-qA)) \mbox{\ is zero for} \ m \geq qa.
\end{equation} 
On the other hand $\dim V\geq 1$, as $\B_+(D)$ does not contain isolated points by \cite[Proposition 1.1]{elmnp2}(which holds on $X$ normal).
As $\dim \Nlc(X,\D) \leq 1$, this implies that $V$ is an irreducible component of $Z$. Moreover, as $V\not \subseteq \Null(D)$, we have that $D_{|_V}$ is big.

Now,  by Remark \ref{ic},  $\mathcal{J}_{NLC}(X,\D+E)$ is integrally closed, and exactly as in  \cite[Proof of Thm. 10.3.5]{laz} (the proof of this part holds on any normal projective variety) it follows that, for $m \gg 0$, $H^0(Z,\mathcal{O}_Z(mD-qA))\to H^0(V,\mathcal{O}_V(mD-qA))$ is not zero, thus contradicting \eqref{due}.
This concludes the proof.
\end{proof}
\renewcommand{\proofname}{Proof}

\renewcommand{\proofname}{Proof of Corollary {\rm \ref{dim}}}  
\begin{proof}
Note that, on any normal projective variety $X$, we have $X_{\mathrm{nlc}} \subseteq \Sing(X)$ (see for example \cite[Rmk 4.8]{cd}) and  if $\dim X\leq 3$, then $\dim \Sing(X) \leq 1$. Then just apply Theorem \ref{main}.
\end{proof}
\renewcommand{\proofname}{Proof}

\renewcommand{\proofname}{Proof of Corollary {\rm \ref{mod}}}
\begin{proof}
By \cite[Thm. 0.9]{gkm} we know that $\Null(D) \subseteq \partial \overline{M}_{g,n}$. On the other hand it is well-known
(see for example \cite[Lemma 10.1]{bchm}) that $(\overline{M}_{g,n}, 0)$ is klt, whence the conclusion follows by Theorem \ref{main}.
\end{proof}
\renewcommand{\proofname}{Proof}

\section{Restricted base loci on klt pairs}
\label{rests}

We first recall that, associated to a pseudoeffective divisor $D$, there are two more loci,  one that also measures how far $D$ is from being nef and another one that measures how far $D$ is from being nef and abundant.

\begin{defn}
\label{as2}
{\rm Let $X$ be a normal projective variety and let  $D$ be a pseudoeffective $\RR$-Cartier $\RR$-divisor on $X$. As in \cite[Def. 1.7]{bbp}, we define the {\bf non-nef locus}
\[ \Nnef(D) = \bigcup_{v : v(\|D\|) > 0} c_X(v) \]
where $v$ runs among all divisorial valuations on $X$, $c_X(v)$ is its center and  $v(\|D\|)$ is as in Definition \ref{as1}.

Let $D$ be a $\QQ$-Cartier $\QQ$-divisor such that $\kappa(D) \geq 0$. As in \cite[Def. 2.18]{cd}, we define the {\bf non nef-abundant locus}
\[ \Nna(D) = \bigcup_{v : v(\langle D \rangle) > 0} c_X(v) \]
where again $v$ runs among all divisorial valuations on $X$ and  $v(\langle D \rangle)$ is as in Definition \ref{as1}.}
\end{defn}

In the sequel we will use the fact that, for $D$ big (\cite[Lemma 3.3]{elmnp1}) or even abundant (\cite[Prop. 6.4]{leh}), we have $v(\|D\|) = v(\langle D \rangle)$, while in general they are different when $D$ is only pseudoeffective (\cite[Rmk 2.16]{cd}).

We will also use (see \cite[page 2]{bfj} and references therein) 

\noindent {\bf Izumi's Theorem}
\label{iz}
{\it Let $X$ be a normal variety over an algebraically closed field $k$ and let $0 \in X$ be a closed point. Let $m_0$ be the maximal ideal of the local ring $\O_{X,0}$ and set, for any $f  \in \O_{X,0}$, $\ord_0 (f) = \max\{j \geq 0 :  f \in m_0^j \}$.
For any divisorial valuation $v$ of $k(X)$ centered at $0$, there exists a constant $C = C(v) > 0$ such that
\[ C^{-1} \ord_0(f) \leq v(f) \leq C \ord_0(f).\]}
We start by proving an analogue of \cite[Prop. 2.8]{elmnp1} for $\Nna(D)$.

\begin{thm}
\label{nna}
Let $X$ be a normal projective variety, let $D$ be a $\QQ$-Cartier $\QQ$-divisor such that $\kappa(D) \geq 0$ and let $v$ be a divisorial valuation on $X$.
Then  
\[ c_X(v)\subseteq \Nna(D) \ \textrm{if and only if} \ v(\langle D \rangle)>0. \]
\end{thm}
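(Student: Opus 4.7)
The ``if'' implication is immediate from the definition of $\Nna(D)$.

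For the ``only if'' implication, assume $c_X(v) \subseteq \Nna(D)$ and let $\eta_Z$ be the generic point of $Z := c_X(v)$. Since $\eta_Z \in \Nna(D)$, there is a divisorial valuation $w$ with $w(\langle D \rangle) > 0$ and $\eta_Z \in c_X(w)$; as $c_X(w)$ is closed, $Z \subseteq W := c_X(w)$. The task is reduced to showing that, under these hypotheses, $v(\langle D \rangle) > 0$.

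The plan is to use Izumi's theorem to produce a constant $\lambda > 0$ such that $v(f) \geq \lambda \, w(f)$ for every $f$ regular at $\eta_Z$ (and hence at $\eta_W$). Writing $\ord_Z$ and $\ord_W$ for the $\mathfrak{m}$-adic orders on the normal local rings $\O_{X,\eta_Z}$ and $\O_{X,\eta_W}$, Izumi supplies constants $c_1, c_2 > 0$ with $v \geq c_1 \, \ord_Z$ on $\O_{X,\eta_Z}$ (both are centred at $\eta_Z$) and $\ord_W \geq c_2 \, w$ on $\O_{X,\eta_W}$ (both are centred at $\eta_W$); in addition, the inclusion $\mathfrak{p}_W \subseteq \mathfrak{p}_Z$ coming from $Z \subseteq W$ gives the elementary estimate $\ord_Z(f) \geq \ord_W(f)$ for $f \in \O_{X,\eta_Z}$. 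Chaining the three inequalities yields $v \geq \lambda \, w$ with $\lambda = c_1 c_2$.

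To conclude, fix $b \in \NN$ with $bD$ Cartier and $|bD| \neq \emptyset$, and for each $m \gg 0$ choose $E_m \in |mbD|$ generic simultaneously for $v$ and $w$ (each is a dense Zariski open condition in the linear series). Taking a local equation $g_m$ of $E_m$ in an affine open containing both $\eta_Z$ and $\eta_W$, the bound above reads $v(|mbD|) = v(g_m) \geq \lambda \, w(g_m) = \lambda \, w(|mbD|)$; dividing by $mb$ and letting $m \to \infty$ gives $v(\langle D \rangle) \geq \lambda \, w(\langle D \rangle) > 0$.

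The main delicate point is the comparison $v \geq \lambda w$ across the two distinct centres $Z$ and $W$: classical Izumi compares divisorial valuations with the \emph{same} maximal ideal, so one has to bridge the two local rings $\O_{X,\eta_Z}$ and $\O_{X,\eta_W}$ through the intermediate $\mathfrak{m}$-adic orders $\ord_Z$ and $\ord_W$ and verify the elementary comparison $\ord_Z \geq \ord_W$.
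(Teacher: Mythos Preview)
Your chain $v \geq c_1\,\ord_Z \geq c_1\,\ord_W \geq c_1 c_2\, w$ breaks at the middle link: the ``elementary estimate'' $\ord_Z(f)\geq \ord_W(f)$ is \emph{false} on singular varieties, which is exactly the setting of the theorem. Take $X=\operatorname{Spec} k[x,y,z]/(xz-y^2)$ (the $A_1$ surface singularity), $Z$ the vertex, and $W=\{x=y=0\}$ the ruling through it. In $\O_{X,\eta_W}$ the element $z$ is a unit, so $x=y^2/z$ and $\mathfrak m_W=(y)$, giving $\ord_W(x)=2$; but in $\O_{X,\eta_Z}$ one has $x\in\mathfrak m_Z\setminus\mathfrak m_Z^2$ (the associated graded ring is $k[x,y,z]/(xz-y^2)$, so $x$ is nonzero in degree~$1$), hence $\ord_Z(x)=1<2=\ord_W(x)$. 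The inclusion $\mathfrak p_W\subseteq\mathfrak p_Z$ only gives $\mathfrak p_W^n\subseteq\mathfrak p_Z^n$ for the ordinary powers; it does not control the \emph{symbolic} powers $\mathfrak p_W^{(n)}=\mathfrak p_W^n\O_{X,\eta_W}\cap\O_X$ that define $\ord_W$, and that is where your argument slips. (On smooth $X$ the estimate does hold, which is perhaps what you had in mind, but the theorem is about normal $X$.)

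The paper avoids comparing valuations with different centers altogether. It first settles the smooth case via asymptotic multiplier ideals and the description $\Nna(D)=\bigcup_p\mathcal Z(\mathcal J(X,\|pD\|))$. For singular $X$ it reduces to the case where $c_X(v)$ is a closed point $x$: pick a resolution carrying the divisor $E_0$ computing $w$, choose $y\in E_0$ over $x$, and blow up $y$ to get $E_y$; then $\ord_{E_y}\geq\ord_{E_0}$ holds on the smooth model, and now Izumi compares $\ord_{E_y}$ with $\nu$ because both are centered at the \emph{same} closed point $x$. The general case is then pulled back to a smooth model and fed into the first step. If you want to rescue your approach you would need a genuine comparison $\ord_Z\geq c_3\,\ord_W$ with some constant $c_3>0$; such a bound can be extracted, but it is not elementary and essentially requires the same reduction-to-a-resolution manoeuvre the paper performs.
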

\begin{proof}
We can assume that $D$ is Cartier and effective. By definition of $\Nna(D)$, we just need to prove that if $c_X(v) \subseteq \Nna(D)$, then $v(\langle D \rangle)>0$. 

We first prove the theorem when $X$ is smooth.  For any $p \in \NN$ let $b(|pD|)$ be the base ideal of $|pD|$, $\mathcal{J}(X,\|pD\|))$ the asymptotic multiplier ideal and denote by $b_p$ and $j_p$ the corresponding images in $R_v$, the DVR associated to $v$. As in \cite[\S 2]{elmnp1}, we get
\begin{equation}
\label{j}
v(\langle D \rangle) = \lim_{p \to +\infty} \frac{v(b_p)}{p} \geq \lim_{p \to +\infty} \frac{v(j_p)}{p} = \sup_{p \in \NN} \{\frac{v(j_p)}{p}\}.
\end{equation}
By \cite[Cor. 5.2]{cd} we have the set-theoretic equality
\[ \Nna(D)=\bigcup_{p \in \NN} \mathcal{Z}(\mathcal{J}(X,\|pD\|)) \]
whence there exists $p_0 \in \NN$ such that $c_X(v) \subseteq \mathcal{Z}(\mathcal{J}(X,\|p_0D\|))$, so that $v(j_{p_0})>0$ and \eqref{j} gives that $v(\langle D \rangle)>0$. 

We now prove the theorem for a divisorial valuation $\nu$ on $X$ such that $c_X(\nu) = \{x\}$ is a closed point.

As $c_X(\nu) \subseteq \Nna(D)$, there exists a divisorial valuation $v_0$ on $X$ such that $v_0(\langle D \rangle)>0$ and $x \in c_X(v_0)$.
Let $E_0$ be a prime divisor over $X$ such that  $v_0 = k \ord_{E_0}$ for some $k \in \NN$. We can assume that there is a birational morphism  $\mu:Y\to X$ from a smooth 
variety $Y$ such that $E_0 \subset Y$. As $\mu(E_0)=c_X(\ord_{E_0}) = c_X(v_0 )$, there is a point $y \in E_0$ such that $\mu(y) = x$. Let $\pi : Y' \to Y$ be the blow-up of $Y$ on $y$ with exceptional divisor $E_y$. For any $m \in \NN$ and $G \in |mD|$ we have
$$\ord_{E_y}(G) =\ord_{E_y}(\pi^*(\mu^*G)) = \ord_{y}(\mu^*G) \geq \ord_{E_0}(\mu^*G) = \ord_{E_0}(G)$$
therefore $\ord_{E_y}(\langle D \rangle) \geq \ord_{E_0}(\langle D \rangle) = \frac{1}{k} v_0(\langle D \rangle)>0$.
Since $c_X(\ord_{E_y})=\{x\}$, by Izumi's Theorem applied twice, there exist $C > 0, C' > 0$ such that for all $m \in \NN$ and $G \in |mD|$ we have
$\ord_{E_y}(G) \leq C' \ord_{x}(G) \leq C \nu(G)$.  Hence $\nu(\langle D \rangle) \geq \frac{1}{C} \ord_{E_y}(\langle D \rangle)>0$.

Finally let $v$ be any divisorial valuation on $X$ with $c_X(v)\subseteq \Nna(D)$. As above there is a birational morphism $f : Z \to X$ from a smooth 
variety $Z$ and a prime divisor $E \subset Z$ such that $v = h \ord_E$ for some $h \in \NN$. For every closed point $z \in E$ we have that  $\nu:= \ord_z$ is a divisorial valuation with $c_X(\nu) \subseteq c_X(\ord_E) \subseteq \Nna(D)$ and $c_X(\nu)$ is a closed point. Thus, by what we proved above, we have that $\ord_z(\langle f^*(D) \rangle) = \ord_z(\langle D \rangle)>0$ for all $z \in E$, so that $E \subseteq \Nna(f^*(D))$. As $Z$ is smooth, we get $v(\langle D \rangle) = h\ord_E(\langle D \rangle) = h \ord_E(\langle f^*(D) \rangle)>0$.
\end{proof}

We next prove an analogous result for $\Nnef(D)$.

\begin{thm}
\label{nnef}
Let $X$ be a normal projective variety, let $D$ be a pseudoeffective $\RR$-Cartier $\RR$-divisor on $X$ and let $v$ be a divisorial valuation on $X$.
Then
$$c_X(v) \subseteq \Nnef(D) \ \textrm{if and only if} \ v(\|D\|)>0.$$
\end{thm}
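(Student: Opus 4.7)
The implication $v(\|D\|)>0\Rightarrow c_X(v)\subseteq\Nnef(D)$ is immediate from the definition of $\Nnef(D)$. For the converse my plan is to mimic, essentially step by step, the three-part proof of Theorem~\ref{nna}: first a smooth-variety step where the statement is reduced to \cite[Prop.~2.8]{elmnp1} and the asymptotic multiplier ideals $\mathcal{J}(X,\|pD\|)$, then a closed-point step on a normal $X$ where a divisorial valuation $v_0$ with $v_0(\|D\|)>0$ is transported along a smooth model and a blow-up and compared with $\nu$ via Izumi's theorem, and finally a reduction of the general case to the closed-point step on a smooth birational model.

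\emph{Smooth case.} For big $D$ on smooth $X$ one has $v(\|D\|)=v(\langle D\rangle)$ by \cite[Lem.~3.3]{elmnp1}, and the equivalence is precisely \cite[Prop.~2.8]{elmnp1}: via the ascending chain of zero loci of $\mathcal{J}(X,\|pD\|)$ and the ACC on ideals, one has $\Nnef(D)=\mathcal{Z}(\mathcal{J}(X,\|p_0D\|))$ for some $p_0$, and $c_X(v)\subseteq\Nnef(D)$ then forces $v(\|D\|)>0$. For a pseudoeffective $D$ I would fix an ample Cartier $A$. The sets $\Nnef(D+\varepsilon A)$ are closed (big case) and form an ascending family as $\varepsilon$ decreases; by Noetherianity of $X$ one obtains $\varepsilon_0>0$ with $\Nnef(D)=\bigcup_{\varepsilon>0}\Nnef(D+\varepsilon A)=\Nnef(D+\varepsilon_0 A)$, reducing the pseudoeffective case to the big case applied to $D+\varepsilon_0 A$, together with the inequality $v(\|D\|)\geq v(\|D+\varepsilon_0 A\|)$ coming from Definition~\ref{as1}.

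\emph{Closed-point and general steps.} These are a direct transcription of the corresponding parts of the proof of Theorem~\ref{nna}. Given $\nu$ with $c_X(\nu)=\{x\}\subseteq\Nnef(D)$, pick $v_0=k\ord_{E_0}$ with $v_0(\|D\|)>0$ and $x\in c_X(v_0)$. Take $\mu:Y\to X$ a smooth model containing $E_0$, $y\in E_0$ with $\mu(y)=x$, and $E_y$ the exceptional divisor of the blow-up of $Y$ at $y$. For every effective $\RR$-Cartier $\RR$-divisor $G\equiv_\RR D+\varepsilon A$ on $X$,
\[ \ord_{E_y}(G)=\ord_y(\mu^\ast G)\geq\ord_{E_0}(\mu^\ast G)=\ord_{E_0}(G), \]
so passing to the infimum over $G$ and then to the limit $\varepsilon\to 0^+$ yields $\ord_{E_y}(\|D\|)\geq\ord_{E_0}(\|D\|)=\tfrac{1}{k}v_0(\|D\|)>0$. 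Izumi's theorem, applied to $\ord_{E_y}$ and $\nu$ which share the center $\{x\}$, furnishes $C>0$ with $\ord_{E_y}(G)\leq C\nu(G)$ for every effective $\RR$-divisor $G$ on $X$, hence $\nu(\|D\|)\geq\tfrac{1}{C}\ord_{E_y}(\|D\|)>0$. For a general divisorial $v$ with $c_X(v)\subseteq\Nnef(D)$, pick a smooth model $f:Z\to X$ and a prime divisor $E\subset Z$ with $v=h\ord_E$. For every closed point $z\in E$, $c_X(\ord_z)=\{f(z)\}\subseteq c_X(v)\subseteq\Nnef(D)$, so the closed-point step gives $\ord_z(\|D\|)>0$; thus $E\subseteq\Nnef(f^\ast D)$, the smooth case on $Z$ yields $\ord_E(\|f^\ast D\|)>0$, and $v(\|D\|)=h\ord_E(\|f^\ast D\|)>0$.

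The main obstacle I anticipate is the pseudoeffective reduction in the smooth case -- specifically the Noetherian stabilization $\Nnef(D)=\Nnef(D+\varepsilon_0 A)$, which relies on $\Nnef$ of a big divisor on a smooth variety being closed (itself a consequence of its description as the zero locus of the stabilized asymptotic multiplier ideal). Once this is established the remainder is a routine adaptation of the proof of Theorem~\ref{nna}.
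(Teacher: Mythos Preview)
You correctly flag the pseudoeffective reduction as the crux, but your proposed resolution has a genuine gap. You assert that the sets $\Nnef(D+\varepsilon A)$ are closed and, forming an ascending family, must stabilize by Noetherianity: $\Nnef(D)=\Nnef(D+\varepsilon_0 A)$. Both ingredients fail. Noetherian spaces satisfy the \emph{descending} chain condition on closed subsets, so an increasing family of closed sets need not stabilize. And your reason for $\Nnef(B)$ being closed when $B$ is big (``the zero locus of the stabilized asymptotic multiplier ideal'') is itself wrong: the ideals $\mathcal{J}(X,\|pB\|)$ form a \emph{descending} chain in $p$, so ACC on ideals says nothing, and \cite[Prop.~2.8]{elmnp1} makes no closedness claim. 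Indeed, your stabilization would force $\B_-(D)=\Nnef(D)$ to be closed for every pseudoeffective $D$ on a smooth variety, which is precisely what \cite{les} disproves.

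The paper's argument avoids all of this and is much shorter: it reduces directly to Theorem~\ref{nna} on the given normal $X$, without rerunning the smooth/closed-point/general trichotomy. By \cite[Lemmas~2.12, 2.13]{cd} one has $\Nnef(D)=\bigcup_{m}\Nnef(D+A_m)$ with each $D+A_m$ a big $\QQ$-divisor; since $\Nnef(D+A_m)$ is by definition a union of closed irreducible centers $c_X(w)$, the generic point of $c_X(v)$ lands in some such $c_X(w)\subseteq\Nnef(D+A_{m_0})$, whence $c_X(v)\subseteq\Nnef(D+A_{m_0})=\Nna(D+A_{m_0})$. Theorem~\ref{nna} then yields $v(\|D+A_{m_0}\|)=v(\langle D+A_{m_0}\rangle)>0$, and $v(\|D\|)\geq v(\|D+A_{m_0}\|)$ finishes. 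The Izumi/blow-up machinery you reproduce is already absorbed in Theorem~\ref{nna} and need not be repeated here.
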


\begin{proof} Again we need to prove that $v(\|D\|) > 0$ if $c_X(v) \subseteq \Nnef(D)$.
By \cite[Lemmas 2.13 and 2.12]{cd}, there exists a sequence of ample $\RR$-Cartier $\RR$-divisors $\{A_m\}_{m \in \NN}$
such that $\|A_m\| \to 0$, $D+A_m$ is a big $\QQ$-Cartier $\QQ$-divisor for all $m \in \NN$ and
$$\Nnef(D)=\bigcup_{m\in \NN} \Nnef(D+A_m).$$
Then there is $m_0 \in \NN$ such that $c_X(v)\subseteq \Nnef(D+A_{m_0})$. As $D+A_{m_0}$ is big, we have $\Nnef(D+A_{m_0}) = \Nna(D+A_{m_0})$, whence $v(\|D+A_{m_0}\|) = v(\langle D+A_{m_0}\rangle) >0$ by Theorem \ref{nna}.
Therefore $0 < v(\|D+A_{m_0}\|) \leq v(\|D\|) + v(\|A_{m_0}\|) = v(\|D\|)$.
\end{proof}

\begin{rem}
{\rm Note that, given a normal projective variety $X$, Theorems {\rm \ref{nna}} and {\rm \ref{nnef}} can be rewritten as follows (where $x$ is a closed point).

If $D$ is a $\QQ$-Cartier $\QQ$-divisor on $X$ such that $\kappa(D)\geq 0$, then
$$\Nna(D)=\bigcup_{x \in X}  \{x\;|\;\{x\}=c_X(v) \mbox{ for some divisorial valuation } v \mbox{ with } v(\langle D\rangle)>0\}.$$

If $D$ is a pseudoeffective $\RR$-Cartier $\RR$-divisor on $X$, then
$$\Nnef(D)=\bigcup_{x \in X}  \{x\;|\;\{x\}=c_X(v) \mbox{ for some divisorial valuation } v \mbox{ with } v(\|D\|)>0\}.$$}
\end{rem}

Next we will prove Theorem {\rm \ref{main2}. We will use a singular version (see for example \cite[Def. 2.2]{cd}) of standard asymptotic multiplier ideal sheaves \cite[Ch. 11]{laz}.
\renewcommand{\proofname}{Proof of Theorem {\rm \ref{main2}}}
\begin{proof}
In both cases we have that $\Nnef(D) = \B_-(D)$ by \cite[Thm. 1.2]{cd}, whence also $\Nna(D) = \B_-(D)$ in case (i). Then (ii) follows by Theorem \ref{nnef} and the first equivalence in (i) by Theorem \ref{nna}. To complete the proof of (i) we need to show that if $\limsup_{m \to + \infty} v(|mD|) = + \infty$ then $v(\langle D\rangle) > 0$, the reverse implication being obvious. We will proceed similarly to \cite[Proof of Prop. 2.8]{elmnp1} and \cite[Proof of Lemma 4.1]{cd}. If $v(\langle D\rangle) = 0$, by what we just proved, we have that $c_X(v) \not\subseteq \B_-(D)$ and, by \cite[Cor. 5.2]{cd}, we have the set-theoretic equality
\[ \B_-(D) = \bigcup_{p \in \NN} \mathcal{Z}(\mathcal{J}((X,\D);\|pD\|)) \]
where $\mathcal{J}((X,\D);\|pD\|)$ is as in \cite[Def. 2.2]{cd}. Therefore $c_X(v) \not\subseteq  \mathcal{Z}(\mathcal{J}((X,\D); \|pD\|))$ for any $p \in \NN$. Let $H$ be a very ample Cartier divisor such that $H - (K_X + \D)$ is ample and let $n = \dim X$. By Nadel's vanishing theorem \cite[Thm. 9.4.17]{laz}, we deduce that $\mathcal{J}((X,\D);\|pD\|) \otimes \mathcal{O}_X((n + 1)H + pD)$ is $0$-regular in the sense of Castelnuovo-Mumford, whence globally generated, for every $p \in \NN$, and therefore $c_X(v) \not\subseteq \Bs|(n+1)H + pD|$. On the other hand, as $D$ is big, there is $m_0 \in \NN$ such that $m_0 D \sim (n + 1)H + E$ for some effective Cartier divisor $E$. Hence, for any $m \geq m_0$, we get $v(|mD|) = v(|(m-m_0)D + (n + 1)H + E|) \leq  v(|(m-m_0)D + (n + 1)H|) + v(|E|) =  v(|E|)$ and the theorem follows. 
\end{proof}
\renewcommand{\proofname}{Proof}

We end the section with an observation on the behavior of these loci under birational maps.

\begin{cor}
Let $f:Y \to X$ be a projective birational morphism between normal projective varieties. Then:

\begin{itemize}
\item [(i)] For every $\QQ$-Cartier $\QQ$-divisor $D$ on $X$ such that $\kappa(D)\geq 0$, we have
$$\Nna(f^*(D))=f^{-1}(\Nna(D));$$
\item [(ii)] For every pseudoeffective $\RR$-Cartier $\RR$-divisor on $X$, we have 
$$\Nnef(f^*(D))=f^{-1}(\Nnef(D));$$
\item [(iii)] If there exist effective Weil $\QQ$-divisors $\D_X$ on $X$ and $\D_Y$ on $Y$ such that $(X,\D_X)$ and $(Y,\D_Y)$ are klt pairs,
then, for every pseudoeffective $\RR$-Cartier $\RR$-divisor on $X$, we have 
$$\B_-(f^*(D))=f^{-1}(\B_-(D))$$
\end{itemize}
\end{cor}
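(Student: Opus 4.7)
My plan is to deduce the Corollary from the valuative characterizations of $\Nna$ and $\Nnef$ established in Theorems \ref{nna} and \ref{nnef}, together with two elementary birational invariance properties. Since $f$ is birational, $K(X)=K(Y)$, so every divisorial valuation $v$ on this common function field is simultaneously a divisorial valuation on $X$ and on $Y$. I will use two facts: first, that $v(\langle D\rangle)=v(\langle f^*D\rangle)$ and $v(\|D\|)=v(\|f^*D\|)$, which follows from the identification $f^*\colon H^0(X,mD)\xrightarrow{\sim} H^0(Y,mf^*D)$ together with the observation that $f^*$ fixes rational functions (in the pseudoeffective case of $v(\|\cdot\|)$, continuity on the big cone is used to pass the limit over ample perturbations through $f^*$); second, that $c_X(v)=f(c_Y(v))$, by the valuative criterion applied to the proper morphism $f$.

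Granted these, the inclusion $\Nna(f^*D)\subseteq f^{-1}(\Nna(D))$ in (i) is immediate: if $w$ is a divisorial valuation on $Y$ with $y\in c_Y(w)$ and $w(\langle f^*D\rangle)>0$, then $f(y)\in f(c_Y(w))=c_X(w)$ and $w(\langle D\rangle)=w(\langle f^*D\rangle)>0$, so $f(y)\in\Nna(D)$. For the reverse inclusion, which is the main step, let $y\in f^{-1}(\Nna(D))$; since $\Nna(D)$ is a union of closed subvarieties of the form $c_X(v)$, it is stable under specialization, and in particular $\overline{\{f(y)\}}\subseteq\Nna(D)$. I then choose a divisorial valuation $w$ on $Y$ whose center is exactly $\overline{\{y\}}$, obtained as the order of vanishing along a prime exceptional divisor dominating $\overline{\{y\}}$ in a projective birational modification of $Y$ (for instance, by blowing up the strict transform of $\overline{\{y\}}$ in a resolution of singularities). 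Then $c_X(w)=f(\overline{\{y\}})=\overline{\{f(y)\}}\subseteq\Nna(D)$, so Theorem \ref{nna} applied on $X$ yields $w(\langle D\rangle)>0$, whence $w(\langle f^*D\rangle)>0$ and $y\in c_Y(w)\subseteq\Nna(f^*D)$.

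Part (ii) is proved by the same argument with Theorem \ref{nnef} in place of Theorem \ref{nna} and $v(\|\cdot\|)$ in place of $v(\langle\cdot\rangle)$. Part (iii) then follows formally from (ii) and \cite[Thm. 1.2]{cd}: the klt hypothesis on both $(X,\D_X)$ and $(Y,\D_Y)$ gives $\B_-(D)=\Nnef(D)$ and $\B_-(f^*D)=\Nnef(f^*D)$, so the equality from (ii) translates directly to $\B_-(f^*D)=f^{-1}(\B_-(D))$. The step I expect to be most delicate is the pullback invariance $v(\|D\|)=v(\|f^*D\|)$ in the pseudoeffective range, since this quantity is defined through a limit over ample perturbations $D+\varepsilon A$ and $f^*A$ is only nef and big (not ample) on $Y$; this is handled via continuity of $v(\|\cdot\|)$ on the big cone, but deserves careful verification. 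Everything else reduces to the already established Theorems \ref{nna}, \ref{nnef} and the birational stability of centers and pullbacks of sections.
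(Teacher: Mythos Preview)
Your proposal is correct and follows essentially the same approach as the paper: pick a divisorial valuation with prescribed center at the given point, use the birational invariance $v(\langle D\rangle)=v(\langle f^*D\rangle)$ (resp.\ $v(\|D\|)=v(\|f^*D\|)$) together with $c_X(v)=f(c_Y(v))$, apply Theorems~\ref{nna} and~\ref{nnef}, and deduce (iii) from (ii) via \cite[Thm.~1.2]{cd}. The paper streamlines the argument by restricting to closed points $y\in Y$ from the outset and running a single chain of equivalences (applying Theorem~\ref{nna} on both $X$ and $Y$), which avoids the specialization step and the separate treatment of the two inclusions; your version is a bit more explicit but logically equivalent.
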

\begin{proof}
To see (i), for every closed point $y \in Y$, let $v_y$ be a divisorial valuation such that $c_Y(v_y)=\{y\}$.
Then, by Theorem \ref{nna}, we have, 
\[ y\in f^{-1}(\Nna(D))\Leftrightarrow \{f(y)\}=c_X(v_y)\subseteq \Nna(D)\Leftrightarrow \]
\[ \Leftrightarrow v_y(\langle f^*(D) \rangle) = v_y(\langle D\rangle)>0\Leftrightarrow \{y\}=c_Y(v_y)\subseteq \Nna(f^*(D)). \]
Now (ii) can be proved exactly in the same way by using Theorem  \ref{nnef}, while (iii) follows from (ii) and \cite[Thm. 1.2]{cd}.
\end{proof}

\end{document}